\documentclass[11pt]{amsart}

\usepackage{amsmath,amssymb,amsfonts}
\usepackage{pdfpages}
\usepackage{listings}
\usepackage{graphicx}

\newtheorem{theorem}{Theorem}

\newtheorem*{problem*}{Problem}

\newtheorem*{conj}{Conjecture}

\newcommand{\IR}{\mathbb{R}}

\usepackage[top=2.5cm, bottom=2.5cm, left=2.5cm, right=2.5cm]{geometry}

\usepackage{hyperref}

\hypersetup{
  pdftitle={\title},
  pdfauthor={\author},
  colorlinks=true,
  breaklinks=true,
  bookmarksopen=true,
  bookmarksnumbered=true,
  pdfpagemode=UseOutlines,
  plainpages=false,
  linkcolor=blue,
  citecolor=orange,
  anchorcolor=green}

\begin{document}

\title{On the Koebe Quarter Theorem for Polynomials}

\author{Jimmy Dillies}
\address{Department of Mathematics,  Georgia Southern University, Statesboro GA, 30460, USA}
\email{jdillies@georgiasouthern.edu}
\urladdr{http://jimmy.klacto.net/}

\author{Dmitriy Dmitrishin} 
\address{Department of Applied Mathematics, Odessa National Polytechnic University,  Odessa, 65044, Ukraine}
\email{dmitrishin@opu.ua}

\author{Andrey Smorodin}
\address{Department of Applied Mathematics, Odessa National Polytechnic University,  Odessa, 65044, Ukraine}
\email{andrey.v.smorodin@gmail.com}

\author{Alex Stokolos}
\address{Department of Mathematics,  Georgia Southern University, Statesboro GA, 30460, USA}
\email{astokolos@georgiasouthern.edu}

\keywords{Koebe one-quarter theorem, Koebe radius, univalent polynomial}
\subjclass[2010]{Primary 30C10, 30C25; Secondary 30C55, 30C75}

\date{}

\begin{abstract}
D.~Dimitrov \cite{D} has posed the problem of finding polynomials that set the sharpness of the Koebe Quarter Theorem for polynomials and asked whether Suffridge polynomials \cite{S} are optimal. 
We disprove Dimitrov's conjecture for polynomials of degree 3, 4, 5 and~6. For polynomials of degree 1 and 2 the conjecture is obviously true. 
On the way we introduce a new family of polynomials that allows us to state a conjecture about the value of the Koebe radius for polynomials of a specific degree. 
This article is a continuation of the research started in \cite{DDS}.
\end{abstract}

\maketitle


\section{Introduction}  

Geometric complex analysis has arisen from two fundamental statements: the Koebe Quarter Theorem and the Bieberbach Conjecture. 
{ Koebe's theorem} states that  for any function $f\in \mathcal S$ the image $f(\mathbb D)$ contains a disc of radius 1/4, where $\mathbb D=\{|z|<1\}$ and ${\mathcal S}=\{f(z): f(0)=0,\, f^\prime(0)=1,\, f(z) \ \text{is univalent in}\ \mathbb D\}$. 
The Bieberbach Conjecture says that $|a_k|\le k$ for all $k=1,2,\ldots$ where $f\in \mathcal S$ and the $a_k$ are the Taylor coefficients of $f$. 
For many decades this conjecture was a driving force of the development of geometric complex analysis. Many outstanding mathematicians contributed through partial solutions until it was resolved in full generality by Louis de Brange in 1984.

Both theorems offer sharp bounds as the so-called {\it Koebe function} 
\begin{equation}\notag
\mathcal K(z):=\frac z{(1-z)^2}=z+2z^2+3z^3+\cdots,\qquad z\in\mathbb D,
\end{equation}
is an extremizer for both statements. 
One can see that the radius $\frac14$ in Koebe's theorem is optimal, as also is the estimate $|a_k|\le k$ in de Brange's result. 

A natural question is whether the constant 1/4 as well as the estimate $|a_k|\le k$ can be improved for  polynomials of a specific degree and what would be a polynomial analogue of the Koebe function. 
Say, for polynomials of the first degree the constant is trivially 1; a simple computation demonstrates that for polynomials of degree 2 it is 1/2. 
The task was formalized by Dimitrov who asked:

\begin{problem*}{\cite[Problem 5]{D}}
For any $N\in\mathbb Z_+$, find a polynomial $p_N(z)=z+a_2z^2+...+a_Nz^n \in \mathcal S$ for which the infimum
$
r_N:=\inf\{|p_N(z)| : z = e^{it},\, 0\le t\le2\pi\}
$
is attained.
\end{problem*}

Let us call $r_N$ the \emph{Koebe radius.} Obviously, we have $r_N\ge 1/4.$
In this artice a new interesting family of typically real polynomials is introduced. 
We conjecture that they are univalent (this is proven for degree $\le 6$) and that they attain the value of the Koebe radius. 

The following statement  a central in the paper:

\begin{conj} The value of the Koebe radius for the polynomials of degree $N$ is $ \frac14\sec^2\frac{\pi}{N+2}.$
\end{conj}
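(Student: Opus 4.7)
The conjecture splits into two matching bounds on the Koebe radius $r_N$: the upper bound $r_N\le\tfrac14\sec^2\tfrac{\pi}{N+2}$ obtained by exhibiting an extremal degree-$N$ polynomial $p_N^{\ast}\in\mathcal S$ with $\min_{|z|=1}|p_N^{\ast}(z)|=\tfrac14\sec^2\tfrac{\pi}{N+2}$, and the matching lower bound $\min_{|z|=1}|p(z)|\ge\tfrac14\sec^2\tfrac{\pi}{N+2}$ for every $p\in\mathcal S$ of degree $N$.

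For the upper bound I would analyze the new family of typically real polynomials introduced in the article. Writing the candidate as $p_N^{\ast}(z)=\sum_{k=1}^N c_k^{(N)}z^k$ with real coefficients whose structure should mirror zeros of the Chebyshev polynomial $U_{N+1}$, the program is threefold. First, establish univalence of $p_N^{\ast}$ in $\mathbb D$, for instance via the Noshiro--Warschawski criterion $\mathrm{Re}\,(p_N^{\ast})'(z)>0$, which reduces to the nonnegativity of an associated trigonometric polynomial and should yield to a Fej\'er--Riesz factorization; the excerpt notes that this step is currently verified only for $N\le 6$. Second, use typical realness together with the sign pattern of the coefficients to show that $|p_N^{\ast}(e^{it})|$ attains its minimum at $t=\pi$, by analyzing $|p_N^{\ast}(e^{it})|^2$ as a trigonometric polynomial in $t$. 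Third, compute $p_N^{\ast}(-1)$ in closed form and verify it equals $-\tfrac14\sec^2\tfrac{\pi}{N+2}$, which I expect to reduce to a Dirichlet-kernel style trigonometric identity.

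For the lower bound the plan is first to reduce to typically real univalent polynomials by symmetrization (given $p\in\mathcal S$ of degree $N$, the polynomial $\tfrac12(p(z)+\overline{p(\bar z)})$ has real coefficients and does not increase the minimum modulus on $|z|=1$; a limiting argument restores univalence), and then to convert the problem into a constrained extremal problem for measures via the Robertson integral representation $p(z)=\int_{-1}^{1}\tfrac{z}{1-2tz+z^2}\,d\mu(t)$. The degree constraint $\deg p\le N$ translates into moment conditions $\int U_k(t)\,d\mu(t)=0$ for $k\ge N$, and $|p(-1)|=\int\tfrac{d\mu(t)}{2(1+t)}$. Minimizing this linear functional on the cone of admissible measures is a truncated Chebyshev--Markov moment problem, and by the general theory of extremal measures one expects the optimum to be attained on a discrete measure supported at canonical Chebyshev-type nodes, producing the explicit constant $\tfrac14\sec^2\tfrac{\pi}{N+2}$ after a quadrature-style calculation.

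The main obstacle is the lower bound, which is exactly why the statement is offered only as a conjecture. The symmetrization step is delicate at finite degree because averaging a univalent polynomial need not preserve univalence; the Robertson representation encodes only typical realness, so the univalence of $p$ must be imposed as an additional, nonlinear constraint on $\mu$ that is not automatically compatible with the linear moment problem; and matching the measure-theoretic optimum to the extremal polynomial requires a precise identification of the canonical nodes. A secondary obstacle, acknowledged in the excerpt, is that univalence of the candidate extremal family is itself open beyond $N=6$, so even the upper bound rests on an unproven structural claim that must be settled before the whole program becomes unconditional.
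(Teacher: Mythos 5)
This statement is offered in the paper only as a \emph{conjecture}: the paper does not prove it, and the two halves you correctly identify are established there only in weakened form. Concretely, the paper proves (i) the upper bound $r_N\le\frac14\sec^2\frac{\pi}{N+2}$ by introducing the polynomials $P_N$ of \eqref{P}, deriving the closed form \eqref{PN}, computing $P_N(-1)=-\frac14\sec^2\frac{\pi}{N+2}$, and verifying univalence for $N\le 6$ via the monotonicity of $R_N(x)=|P_N(e^{it})|^2|_{t=2\arctan x}$ (a Sturm-sequence root count for $\Delta_N$); and (ii) a \emph{non-matching} lower bound $r_N\ge\frac14\sec^2\psi_N$ with $\psi_N=\pi/(N+3)$ for odd $N$, via Bieberbach's omitted-value argument combined with the Rogosinski--Szeg\"o bound $|a_2|\le 2\cos 2\psi_N$. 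Your upper-bound half broadly matches the paper's route, but one step is unworkable as proposed: the Noshiro--Warschawski criterion $\Re\,(p_N^{\ast})'(z)>0$ fails for these candidates. Already for $P_3(z)=z+\frac{2}{\sqrt5}z^2+\frac12\bigl(1-\frac1{\sqrt5}\bigr)z^3$ one finds $\Re P_3'(e^{i\theta})<0$ near $\cos\theta\approx-0.54$; indeed any polynomial with $|a_N|=1/N$ has all critical points on $\partial\mathbb D$ and generically violates $\Re p'>0$. The paper instead combines typical realness ($\Im P_N(e^{it})\ge0$ on $[0,\pi]$) with monotonicity of $|P_N(e^{it})|$ to conclude that the boundary curve is simple.

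The lower-bound half contains the genuine gaps, and they are structural. First, the symmetrization claim --- that $q(z)=\tfrac12\bigl(p(z)+\overline{p(\bar z)}\bigr)$ does not increase the minimum modulus on $|z|=1$ --- is unjustified: at a minimizer $z_0$ of $|p|$ the triangle inequality only gives $|q(z_0)|\le\tfrac12\bigl(|p(z_0)|+|p(\bar z_0)|\bigr)$, and the second term may be large, so no comparison of minima follows; univalence is also lost under averaging. Second, your moment functional is $|p(-1)|=\int\frac{d\mu(t)}{2(1+t)}$, but the quantity to be bounded is $\min_{|z|=1}|p(z)|$, which even for univalent typically real polynomials need not be attained at $z=-1$ --- the paper notes exactly this for odd-degree Suffridge polynomials --- so a lower bound on $|p(-1)|$ does not bound $r_N$. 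Third, and most fundamentally, the Robertson representation encodes only typical realness; the one known mechanism for turning coefficient data into a covering theorem, $f\mapsto f/(1-f/\gamma)$, destroys the degree constraint, which is precisely why the paper's lower bound stalls at $\sec^2\frac{\pi}{N+3}$ instead of $\sec^2\frac{\pi}{N+2}$. Your relaxation drops univalence entirely, so even if the constrained minimum of $\int\frac{d\mu}{2(1+t)}$ did equal $\frac14\sec^2\frac{\pi}{N+2}$ (a plausible computation, since $\frac1{2(1+\cos\frac{2\pi}{N+2})}=\frac14\sec^2\frac{\pi}{N+2}$), the conjecture would remain open. In short, the proposal reproduces the paper's conditional upper bound by a partly invalid method and offers no viable path to the matching lower bound, which is the genuinely open part of the problem.
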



\section{Suffridge polynomials}

A natural approach to  Dimitrov's problem would be to look at truncations of the Koebe function. 
However, there is a significant difference between extremal analytic functions and polynomials. 
Since the derivative of a function univalent in $\mathbb D$ has roots  outside $\mathbb  D$, Vieta's theorem implies the estimate on the leading coefficient
\begin{equation}\label{aN}
|a_N|\le\frac1N.
\end{equation}
Since the coefficients of the Koebe function increase, the truncation is not a univalent in $\mathbb D$ polynomial.

Unfortunately, the variety of known polynomials univalent in $\mathbb D$ is quite limited. 
The estimate \eqref{aN} suggests considering the polynomials $  A_N(z)=\sum_{k=1}^N\frac 1k{z^k}$. 
These are partial sums of the  function $-\log(1-z)$ which is univalent in $\mathbb D$. 
They were proven to be univalent in $\mathbb D$ by G.~W. Alexander in the milestone paper \cite{A}. 
For these polynomials, $|A_N(-1)|\ge \frac12$ and $\frac12$ is sharp. 

Other popular extremal polynomials satisfying \eqref{aN} are the Fej\'er polynomials 
$$ 
F_N(z)=\sum_{k=1}^N\biggl(1-\frac {k-1}N\biggr) {z^k}.
$$ 
These  again indicate that the constant $\frac12$ might be sharp in general. 
Certainly, we need more polynomials to test. However, to construct new extremal univalent polynomials is a quite challenging task. 

Returning to the Koebe function, we should recall that it is extremal for the Bieberbach conjecture and has increasing coefficients, while the coefficients in the above examples are decreasing. 
A  powerful idea of Ted Suffridge \cite{S} was to multiply the Fej\'er coefficients by the sine factor  $\sin\frac{\pi k}{N+1}$, making the new coefficients increase up to some level.  He introduced a remarkable family of extremal polynomials
$$
S_{N,j}(z) = \sum^N_{k=1}
\biggl(1 -\frac{k-1}N\biggr) \frac{\sin(\pi kj /(N + 1))}{\sin(\pi j /(N + 1))}z^k, \qquad j=1,\ldots ,N,
$$
which turn \eqref{aN} into equality. He proved that they are univalent in $\mathbb D$. Below instead of $S_{N,1}(z)$ we will simply write $S_{N}(z)$. 

Also, Suffridge showed that whenever $p_N(z)$ is a polynomial in $\mathcal S$ with real coefficients and $|a_N|=1/N$, the remaining coefficients of $p_N(z)$ are also dominated by the coefficients of  $S_{N}(z)$. 

Moreover,   
$$
|S_{N}(-1)|=\frac14\frac{N+1}N\sec^2\frac\pi{2(N+1)}\to \frac14,
$$ 
hence these polynomials indicate that $1/4$ is asymptotically sharp for the polynomial version of the Koebe Quarter Theorem (cf. \cite{CR}). Thus, Suffridge polynomials may be considered as a counterpart of the Koebe function. 

Note that the value $|S_{N}(-1)|$ is the smallest distance from the image of the unit circle to the origin for polynomials $S_{N}(z)$, but only for even degree. 
For  polynomials of odd degree the infimum $\inf\{|S_{N}(z)|:|z|=1\}$ is not achieved at  $z=-1$, but at  a different point $\xi$ such that $S^\prime_{N}(\xi)=0$ \cite{DDS} (see Fig. 1).

\centerline{
\includegraphics[scale=0.25]{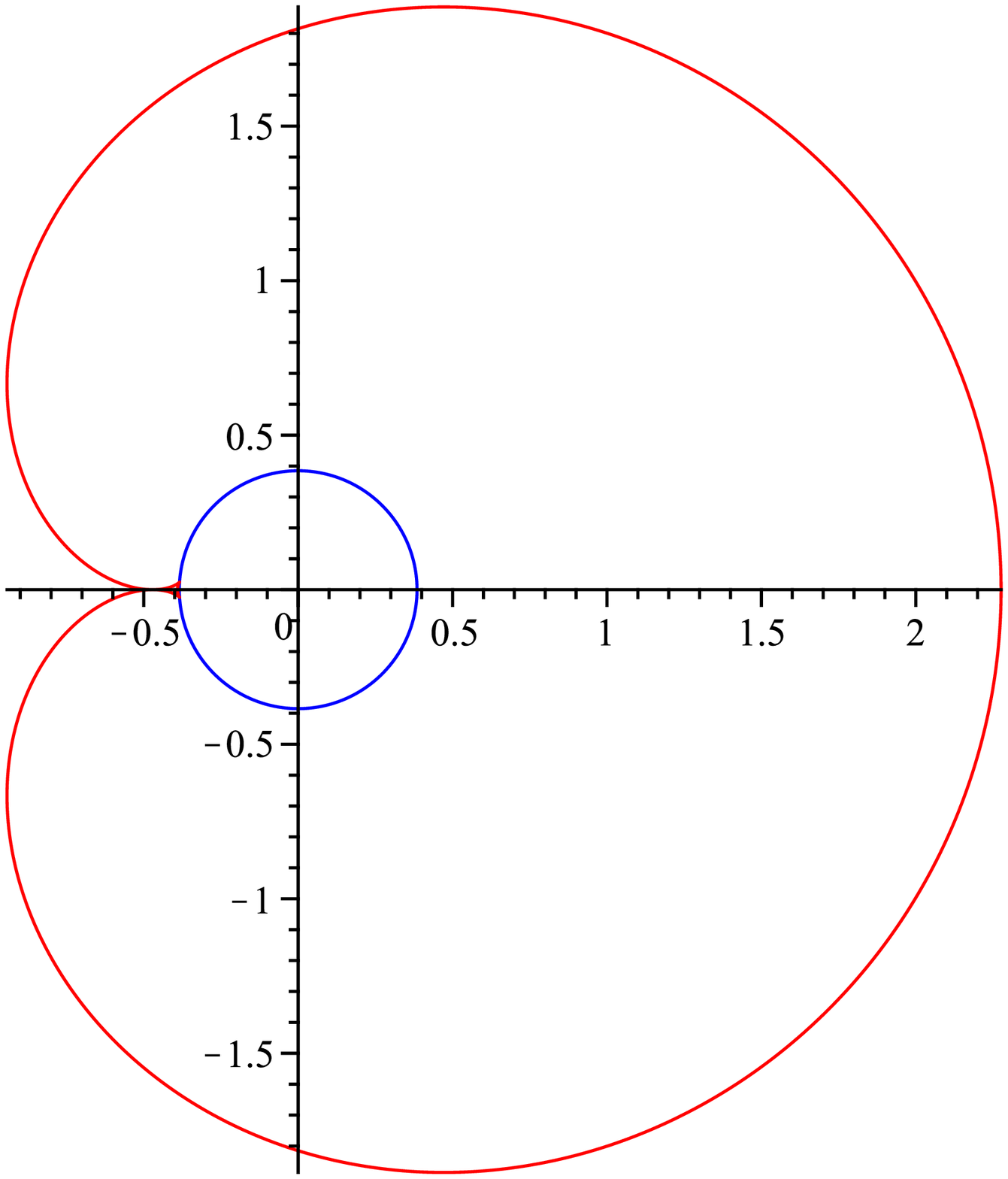}
\includegraphics[scale=0.25]{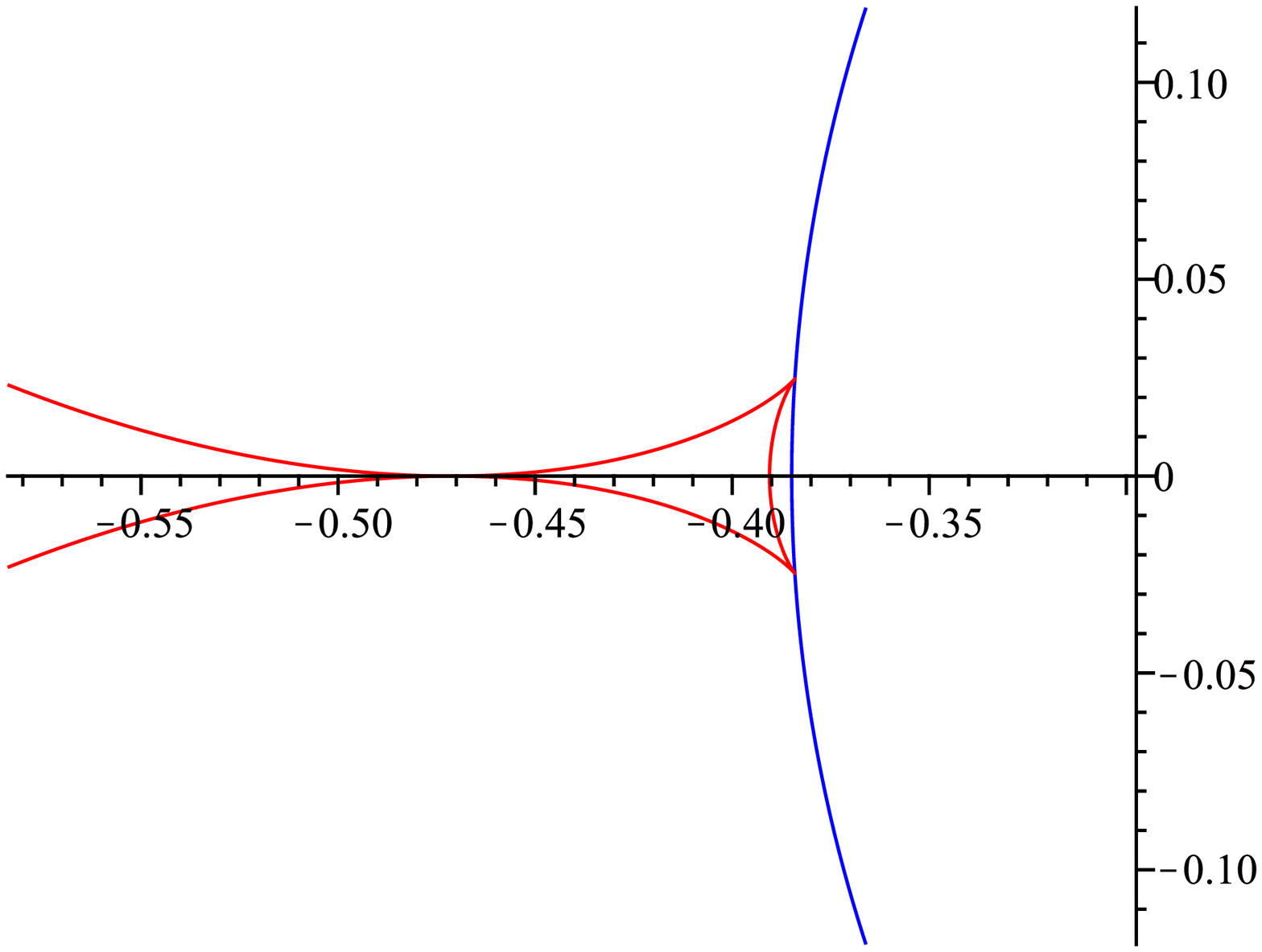}}
\centerline{Fig 1: The image and fragment for $S_{3}(\mathbb D).$}
\medskip

Also, note that the roots of the derivative of a Suffridge polynomial are on $\partial\mathbb D$, and the leading coefficient is $1/N$---the extremal case of the univalence property. 
What could be a better candidate to be a solution to Dimitrov's problem? Actually, Dimitrov \cite[p.~16]{D} asked a specific question about the Suffridge polynomial $S_N(z)$: {\it  Is it extremal for every fixed $N$}? Note that it is indeed extremal for $N=1,2$.

However, our numerous attempts to confirm Dimitrov's conjecture have failed. 
We have got a growing feeling that maybe some other polynomials  could beat out the Suffridge ones. 
But how to get them? 


\section{{ New polynomials}}

We have analyzed the way the Suffridge polynomials emerged. 
They came out as multiplier operators with some sine multipliers applied to the Fej\'er polynomials.
In  turn, the Fej\'er polynomials arose as a solution of the following extremal problem. 

Let $f_N(t)=1+a_1\cos t+\cdots +a_N\cos Nt\ge 0$. Then  $f_N(t)\le N+1$ and the Fej\'er polynomials are extremal here.
Their coefficients can be computed  from the Fej\'er--Riesz representation 
$$
\Phi_N(t)=\frac1{N+1}\left|\sum_{k=0}^N e^{ikt} \right|^2=
1+2\sum_{k=1}^N\left( 1-\frac k{N+1}\right)\cos kt.
$$
Furthermore, for the trigonometric polynomial $ 
F_N(t)=1+2\sum_{k=1}^Na_k\cos{kt}
$ the following Fej\'er inequality is valid:  $  |a_1|\le2\cos\frac\pi{N+2}$, and here the extremal polynomials are the Eg\'ervary--Sz\'asz polynomials \cite{ES}
$$
E_N(t)=\frac{N+2}2+\sum_{k=1}^N\left(\left(N+1-k\right)\cos\frac{k\pi}{N+2} +\frac{\sin\frac{\pi(k+1)}{N+2}}{\sin\frac\pi{N+2}}\right) \cos kt.
$$ 
They can be written in the following form \cite{B}:
$$
E_N(t)=\frac2{N+2}\left|\sum_{k=0}^N\sin\frac{\pi(k+1)}{N+2}e^{ikt}\right|^2=\sum_{k=0}^N b_k\cos kt,
$$
where $b_0=1$ and for $k=1,\ldots ,N$,
\begin{equation}\label{bk}
b_k=\frac{\left(N-k+3\right)\sin\frac{\left(k+1\right)\pi}{N+2}-
\left(N-k+1\right)\sin\frac{\left(k-1\right)\pi}{N+2}}{(N+2)\sin\frac\pi{N+2}}.
\end{equation}

Now, let us apply the same approach  to the Egerv\'ary--Sz\'asz polynomials, i.e. multiply the coefficients \eqref{bk} by $  \sin\frac{k\pi}{N+1}$ and introduce the new polynomials
\begin{equation}\label{P}
P_N(z)=\sum_{k=1}^N b_k \sin\frac{\pi k}{N+1}z^k.
\end{equation}
Below are  some examples: 
\begin{align*}
P_1(z)&=z,\quad
P_2(z)=z+\frac12z^2,\quad
\\
P_3(z)&=z+\frac2{\sqrt5}z^2+\frac12\left(1- \frac1{\sqrt5}\right)z^3,\quad P_4(z)=z+\frac76z^2+\frac23z^3+\frac16z^4,
\\
P_5(z)&=
\begin{aligned}[t]
z&+{\frac {8-40\, \left( \cos \left( \pi/7 \right)  \right) ^{2}+32\,
 \left( \cos \left( \pi/7 \right)  \right) ^{3}-24\,\cos \left( \pi/7
 \right) }{40\, \left( \cos \left( \pi/7 \right)  \right) ^{3}-30\,
\cos \left( \pi/7 \right) -32\, \left( \cos \left( \pi/7 \right) 
 \right) ^{2}+7}}{z}^{2} \\
&+{\frac {24\, \left( \cos \left( \pi/7
 \right)  \right) ^{3}-28\, \left( \cos \left( \pi/7 \right)  \right) 
^{2}-18\,\cos \left( \pi/7 \right) +4}{40\, \left( \cos \left( \pi/7
 \right)  \right) ^{3}-30\,\cos \left( \pi/7 \right) -32\, \left( \cos
 \left( \pi/7 \right)  \right) ^{2}+7}}{z}^{3}\\
&+{\frac {16\, \left( 
\cos \left( \pi/7 \right)  \right) ^{3}-16\, \left( \cos \left( \pi/7
 \right)  \right) ^{2}-12\,\cos \left( \pi/7 \right) +4}{40\, \left( 
\cos \left( \pi/7 \right)  \right) ^{3}-30\,\cos \left( \pi/7 \right) 
-32\, \left( \cos \left( \pi/7 \right)  \right) ^{2}+7}}{z}^{4}\\
&+{\frac {8\, \left( \cos \left( \pi/7 \right)  \right) ^{3}-4\, \left( 
\cos \left( \pi/7 \right)  \right) ^{2}-6\,\cos \left( \pi/7 \right) +
1}{40\, \left( \cos \left( \pi/7 \right)  \right) ^{3}-30\,\cos
 \left( \pi/7 \right) -32\, \left( \cos \left( \pi/7 \right)  \right) 
^{2}+7}}{z}^{5},
\end{aligned}
\\
P_6(z)&=
z+{\frac {9+8\,\sqrt {2}}{4\,\sqrt {2}+8}}{z}^{2}+{\frac {6\,\sqrt {2
}+10}{4\,\sqrt {2}+8}}{z}^{3}+{\frac {4\,\sqrt {2}+6}{4\,\sqrt {2}+8}}
{z}^{4}\\
&\quad+{\frac {2\,\sqrt {2}+2}{4\,\sqrt {2}+8}}{z}^{5}+ \frac1{4\,
\sqrt {2}+8}{z}^{6}
 \end{align*}

\begin{theorem}   
The following representation is valid for $t\in(0,\pi),\; t\not=\frac{2\pi}{N+2}$: 
\begin{align}\label{PN}
P_N(e^{it})&=
  \frac{1}{2\bigl( \cos t- \cos\frac{2\pi}{N+2}\bigr)}\\
 &\quad+
\frac{1-\cos\frac{2\pi}{N+2}}{(N+2)(1-\cos t)}\,
\frac{\sin t\sin\frac{N+2}{2}t}{\bigl(\cos t-\cos\frac{2\pi}{N+2}\bigr)^2} e^{\frac{N+2}{2}it}.\notag
\end{align}
\end{theorem}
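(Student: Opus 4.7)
The starting observation is a sine-factor shift trick: since $\sin\tfrac{\pi k}{N+2}=\tfrac{1}{2i}(\omega^k-\omega^{-k})$ with $\omega:=e^{i\pi/(N+2)}$, setting $\beta:=\pi/(N+2)$ one has
\[
P_N(e^{it})=\frac{1}{2i}\bigl[F(e^{i(t+\beta)})-F(e^{i(t-\beta)})\bigr],\qquad F(z):=\sum_{k=0}^N b_k z^k,
\]
because the $b_0$ term cancels out of the difference. The problem thus reduces to controlling $F$ on the unit circle.

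My first concrete step is to rewrite $b_k$ in a form suited to resummation. Applying $\sin((k\pm 1)\beta)=\sin(k\beta)\cos\beta\pm\cos(k\beta)\sin\beta$ to the numerator of \eqref{bk} and using $(N-k+3)+(N-k+1)=2(N-k+2)$ and $(N-k+3)-(N-k+1)=2$, I obtain, for $k\ge 1$,
\[
b_k=\frac{2\cos\beta}{(N+2)\sin\beta}\sin(k\beta)+\frac{2(N-k+2)}{N+2}\cos(k\beta).
\]
Multiplying by $\sin(k\beta)\,e^{ikt}$ and invoking the double-angle identities $2\sin^2(k\beta)=1-\cos(k\gamma)$ and $2\sin(k\beta)\cos(k\beta)=\sin(k\gamma)$ with $\gamma:=2\beta=2\pi/(N+2)$ splits $P_N(e^{it})$ into
\[
\frac{\cos\beta}{(N+2)\sin\beta}\sum_{k=1}^N(1-\cos(k\gamma))e^{ikt}+\frac{1}{N+2}\sum_{k=1}^N(N-k+2)\sin(k\gamma)e^{ikt}.
\]
Each of these is of geometric or arithmetic-geometric type; writing cosines and sines in exponential form and using $\sum_{k=1}^N z^k=(z-z^{N+1})/(1-z)$, together with a $z\,d/dz$ to handle the factor $k$ hidden inside $(N-k+2)=(N+2)-k$, expresses all four pieces in closed rational form.

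The final step is the combination and trigonometric collapse. On $|z|=1$ the denominator $1-2z\cos\gamma+z^2$ evaluates to $2e^{it}(\cos t-\cos\gamma)$, accounting for the factor $\cos t-\cos\tfrac{2\pi}{N+2}$ in the theorem; the derivative producing the coefficient $k$ yields the squared denominator in the second summand. The exponential $e^{i(N+2)t/2}$ emerges from the midpoint factor $e^{i(N+1)t/2}$ of $\sum_{k=1}^N z^k$ combined with $e^{\pm i\beta}$ or $e^{\pm 2i\beta}$ arising from the boundary terms, after using $(N+1)\beta=\pi-\beta$ and $N\beta=\pi-2\beta$ to simplify $e^{\pm i(N+1)\beta}$ and $e^{\pm iN\beta}$. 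The factor $\sin\tfrac{N+2}{2}t$ then appears from the Dirichlet-kernel-type factors $\sin\tfrac{N(t\pm\gamma)}{2}$; the remaining collapse uses $\sin\gamma=2\sin\beta\cos\beta$, $1-\cos\gamma=2\sin^2\beta$, $1-\cos t=2\sin^2(t/2)$, and $\sin^2 A-\sin^2 B=\sin(A+B)\sin(A-B)$.

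The main obstacle is this last algebraic collapse rather than any conceptual difficulty: the number of trigonometric terms is considerable and one must be systematic about combining them. A cleaner structural alternative is to first establish the compact factorization $U_N(z)=\sin\beta(z^{N+2}+1)/(z^2-2z\cos\beta+1)$ for $U_N(z):=\sum_{k=0}^N\sin((k+1)\beta)z^k$, which holds because the numerator vanishes at the denominator's roots $z=e^{\pm i\beta}$. Combined with the palindromic identity $z^N U_N(1/z)=U_N(z)$, this yields the Laurent identity $F(z)+F(1/z)=\tfrac{4}{N+2}z^{-N}U_N(z)^2$, giving a short closed form for $\operatorname{Re} F(e^{is})$ and hence for $\operatorname{Im} P_N(e^{it})$; the real part of $P_N$ is then recoverable by a parallel palindromic computation, converting the bookkeeping above into a cleaner, rational-function manipulation.
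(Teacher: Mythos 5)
Your decomposition of $b_k$ via the angle-addition formula into $\frac{2\cos\beta}{(N+2)\sin\beta}\sin(k\beta)+\frac{2(N-k+2)}{N+2}\cos(k\beta)$ is exactly the splitting the paper performs: after multiplication by $\sin(k\beta)z^k$ the paper arrives at the same two sums, $\sum(N-k+2)\sin(2k\beta)z^k$ and $\sum\sin^2(k\beta)z^k$ (extended harmlessly to $k=N+1$). Where you part ways is in how these are evaluated. The paper recognizes the first sum as a multiple of the second Suffridge polynomial $S_{N+1,2}$ and imports Suffridge's boundary formula (formula (5) of \cite{S}) wholesale, while merely stating a closed form for the second sum; you instead propose to evaluate both directly as geometric and arithmetic--geometric series via $z\,d/dz$, which is self-contained (no appeal to \cite{S}) but pushes all the work into the final trigonometric collapse --- the identities you invoke ($(N+1)\beta=\pi-\beta$, $\sin^2A-\sin^2B=\sin(A+B)\sin(A-B)$, the double-angle formulas) are the right ones, and the intermediate expressions you do write down check out. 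Your alternative route through the factorization $U_N(z)(z^2-2z\cos\beta+1)=\sin\beta\,(z^{N+2}+1)$ and the palindromy $z^NU_N(1/z)=U_N(z)$ is also correct and arguably cleaner than either computation. One bookkeeping point to fix: as written, $\sum_k b_k\sin(k\beta)e^{ikt}$ has leading coefficient $\sin\frac{2\pi}{N+2}$ rather than $1$, so your expression (like the paper's own opening display) must be divided by $\sin\frac{2\pi}{N+2}$ before it can agree with the right-hand side of \eqref{PN}; the examples $P_2,\dots,P_6$ confirm that this normalization is the intended one.
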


\begin{proof}
We begin with
\begin{multline*}
P_N(z)=
 \frac1{ \left(N+2\right)\sin\frac{2\pi}{N+2}}\sum_{k=1}^N\bigg[\left(N-k+3\right)\sin\frac{\left(k+1\right)\pi}{N+2}\\
-\left(N-k+1\right)\sin\frac{\left(k-1\right)\pi}{N+2}\bigg]\sin\frac{k\pi}{N+2}z^k.
\end{multline*}
Having in mind that
$$
\bigg[2\sin(\pi)-0\cdot \sin\frac{N\pi}{N+2}\bigg]\sin\frac{\left(N+1\right)\pi}{N+2}z^{N+1} \equiv 0
$$
we can  sum  to $N+1$. A further modification produces
$$
P_N(z)=
 \frac1{ \left(N+2\right)\sin\frac{2\pi}{N+2}}\sum_{k=1}^{N+1}\bigg[\left(N-k+2\right)\sin\frac{2k\pi}{N+2}+
2\cdot\frac{\cos\frac{\pi}{N+2}}{\sin\frac{\pi}{N+2}}\sin^2\frac{k\pi}{N+2}\bigg]z^k.
$$
An important observation is that
$$
\frac{N+1}{N+2}\cdot S_{N+1,2}(z)=\frac1{ \left(N+2\right)\sin\frac{2\pi}{N+2}}\sum_{k=1}^{N+1}\left(N-k+2\right)\sin\frac{2k\pi}{N+2}\cdot{z^k},
$$
where $S_{N+1,2}(z)$ is the second Suffridge polynomial of order $N+1$. By using formula (5) in \cite[p. 496]{S} for $n=N+1$ and $j=2$ we get
\begin{align*}
\frac{N+2}{N+1}\cdot S_{N+1,2}\left(e^{it}\right)=\frac{1}{2\bigl(\cos t - \cos\frac{2\pi}{N+2}\bigr)}+
\frac{1}{N+2}\cdot \frac{\sin t \cdot \sin\frac{N+2}{2}t}{\bigl(\cos t-\cos\frac{2\pi}{N+2}\bigr)^2}\cdot e^{\frac{N+2}{2}it}.
\end{align*}
Meanwhile,
$$
\sum_{k=1}^{N+1}\sin^2\frac{k\pi}{N+2}e^{ikt}=\sin^2\frac{\pi}{N+2}\cdot\frac{\sin\frac{N+2}{2}t}{\cos{t}-\cos\frac{2\pi}{N+2}}\cdot
\frac{\sin t}{1-\cos t}\cdot e^{i\frac{N+2}{2}t}.
$$
By combining both formulas, we get the formula in the theorem.
\end{proof}

Note that the right hand side of \eqref{PN} has removable singularities, thus it is  in fact a trigonometric polynomial.

\begin{theorem}\label{thm2}   
The following representation is valid for $t\in(0,\pi),\; t\not=\frac{2\pi}{N+2}$:
\begin{multline*}
4|P_N(e^{it})|^2\\
  \begin{aligned}
&=\left(\frac{\cos\frac{N+2}{2}t}{\cos t- \cos\frac{2\pi}{N+2}}+\frac2{N+2}\frac{1-\cos\frac{2\pi}{N+2}}{1-\cos t}
\frac{\sin t}{(\cos t- \cos\frac{2\pi}{N+2})^2}\sin\frac{N+2}{2}t\right)^2\\
&\quad+\left(\frac{\sin\frac{N+2}{2}t}{\cos t- \cos\frac{2\pi}{N+2}} \right)^2.
\end{aligned}
\end{multline*}
\end{theorem}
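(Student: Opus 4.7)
The plan is to deduce the identity directly from the representation \eqref{PN} of Theorem 1 by multiplying through by a suitable unit-modulus phase. To set up notation, abbreviate $c := \cos\frac{2\pi}{N+2}$ and $\theta := \frac{N+2}{2}t$. In this notation \eqref{PN} reads
\[
P_N(e^{it}) \;=\; \frac{1}{2(\cos t - c)} \;+\; \beta \sin\theta \cdot e^{i\theta},
\qquad
\beta \;:=\; \frac{1-c}{(N+2)(1-\cos t)}\cdot\frac{\sin t}{(\cos t - c)^2},
\]
where both coefficients $\tfrac{1}{2(\cos t - c)}$ and $\beta$ are real. The structural feature to exploit is that $P_N(e^{it})$ is the sum of a real number and a real multiple of the single phase $e^{i\theta}$.

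The first step will be to double the formula to clear the $\tfrac12$, and then multiply both sides by $e^{-i\theta}$. Since this phase factor has unit modulus, it preserves $|2P_N(e^{it})|^2$. Its effect is to turn the second summand into the real quantity $2\beta\sin\theta$ while the first summand acquires the factor $e^{-i\theta} = \cos\theta - i\sin\theta$. Separating real and imaginary parts then gives
\[
e^{-i\theta}\cdot 2P_N(e^{it}) \;=\; \left(\frac{\cos\theta}{\cos t - c} + 2\beta\sin\theta\right) \;-\; i\,\frac{\sin\theta}{\cos t - c}.
\]
Taking squared moduli of both sides and unpacking $2\beta = \frac{2}{N+2}\,\frac{1-c}{1-\cos t}\,\frac{\sin t}{(\cos t - c)^2}$ will produce exactly the sum-of-two-squares formula stated in Theorem \ref{thm2}.

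There is essentially no obstacle once Theorem 1 is in hand; the argument reduces to a single rotation in the complex plane. The only nontrivial choice is the phase $e^{-i\theta}$, which is forced by the requirement that one of the two summands in \eqref{PN} become real so that the other splits cleanly into cosine and sine parts. A brute-force expansion of $|P_N|^2 = P_N\overline{P_N}$ would first yield the intermediate expression $4A^2 + 4AD\sin(2\theta) + 4D^2\sin^2\theta$ (with $A := \tfrac{1}{2(\cos t - c)}$ and $D := \beta/1$) and then require the trigonometric rewriting $4A^2 = 4A^2\cos^2\theta + 4A^2\sin^2\theta$ to reach the stated form; the rotation trick makes this grouping automatic and explains why the particular split in the statement is the natural one.
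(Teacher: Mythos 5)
Your argument is correct and follows the route the paper intends: the paper merely asserts that Theorem~2 ``can be directly verified by tedious standard computations'' from the representation in Theorem~1, and your multiplication by the unit-modulus factor $e^{-i\frac{N+2}{2}t}$ is exactly that verification, organized so that the sum-of-two-squares grouping in the statement falls out automatically rather than having to be reassembled from the expansion $4A^2+4A\beta\sin\bigl((N+2)t\bigr)+4\beta^2\sin^2\frac{N+2}{2}t$. The only facts you rely on --- that the constant term and the coefficient $\beta$ in \eqref{PN} are real for $t\in(0,\pi)$, $t\neq\frac{2\pi}{N+2}$, and that rotation preserves the modulus --- are immediate, so there is no gap.
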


The Theorem 2 can be directly verified by tedious standard computations.\\

Further, in order to better understand the behaviour of $P_N$, we will pull back its norm to $\mathbb{R}^+_0$ via the Weierstrass map.
The pull back, 
\[ R_N (x)=|P_N(e^{it})|^2 |_{t=2 \arctan x}\] 
allows us to study a single period of the function.

\begin{theorem}\label{thm3}  
If $(R_N(x))^\prime<0$ for all $x\in(0,\infty)$ then the polynomial $P_N(z)$ is univalent in $\mathbb D$ and 
\begin{equation}\label{PN-1}
|P_N(-1)|=\frac14\sec^2\frac{\pi}{N+2}.
\end{equation}
\end{theorem}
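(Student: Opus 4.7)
My plan is to split Theorem \ref{thm3} into its two conclusions: the identity (\ref{PN-1}), which is a direct consequence of Theorem 1, and the univalence of $P_N$, which will require the monotonicity hypothesis together with the reflection symmetry of $P_N$.

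I would start with the identity, as it does not use the hypothesis. Setting $t=\pi$ in representation (\ref{PN}) eliminates the second summand because it contains the factor $\sin t$, so
\[
P_N(-1)=\frac{1}{2\bigl(\cos\pi-\cos\tfrac{2\pi}{N+2}\bigr)}=-\frac{1}{2\bigl(1+\cos\tfrac{2\pi}{N+2}\bigr)}.
\]
The half-angle identity $1+\cos(2\alpha)=2\cos^2\alpha$ with $\alpha=\pi/(N+2)$ collapses the denominator to $4\cos^2\frac{\pi}{N+2}$, which produces $P_N(-1)=-\tfrac14\sec^2\tfrac{\pi}{N+2}$ and hence (\ref{PN-1}).

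Next, I would translate the hypothesis into a statement on the unit circle. The Weierstrass substitution $t=2\arctan x$ is a smooth bijection of $(0,\infty)$ onto $(0,\pi)$, so $(R_N)'<0$ on $(0,\infty)$ is equivalent to $|P_N(e^{it})|$ being strictly decreasing on $(0,\pi)$. Since $P_N$ has real coefficients, $|P_N(e^{it})|$ is even in $t$, and therefore the infimum of $|P_N|$ on $\partial\mathbb{D}$ is attained uniquely at $t=\pi$. In particular $\inf_{|z|=1}|P_N(z)|=|P_N(-1)|=\tfrac14\sec^2\tfrac{\pi}{N+2}>0$, so the image of $\partial\mathbb{D}$ separates $0$ from $\infty$.

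For univalence I would use the argument principle. Because $P_N(0)=0$ is a simple zero and $P_N$ has no other zeros in $\mathbb{D}$ that we can rule out from Theorem 1, the winding number of $P_N(\partial\mathbb{D})$ about the origin equals $1$. The strict monotonicity of $|P_N(e^{it})|$ on $(0,\pi)$ forces $t\mapsto P_N(e^{it})$ to be injective on the upper semicircle (different moduli give different points), and by the relation $P_N(\bar z)=\overline{P_N(z)}$ also on the lower semicircle. Any coincidence between the two semicircles can therefore occur only at values of $t\in(0,\pi)$ where $P_N(e^{it})$ is real. Reading off Theorem 1 one sees that the imaginary part of $P_N(e^{it})$ carries a factor $\sin^2\tfrac{(N+2)t}{2}$ and is consequently non-negative on $(0,\pi)$; so on these exceptional parameters the image curve is tangent to $\mathbb{R}$ rather than crossing it. This would make $P_N(\partial\mathbb{D})$ bound a single region whose points all have winding number $1$, so that by the argument principle applied to $P_N-w_0$ each value $w_0$ in the bounded complementary component has exactly one preimage in $\mathbb{D}$.

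The most delicate step is the last one: upgrading the mere tangency of the image curve to the real axis into a proof that the bounded complement of $P_N(\partial\mathbb{D})$ is connected (so the preimage count is $1$ everywhere, not just near $0$). This is where strict monotonicity pays off, because it prevents the curve from looping back to create an enclosed inner pocket; but pinning down the geometry cleanly would require using the explicit factorisation from Theorem 1 and a local analysis of the image near each vanishing point of $\sin\tfrac{(N+2)t}{2}$.
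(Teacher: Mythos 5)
Your argument is essentially the paper's own proof: the identity comes from setting $t=\pi$ in \eqref{PN} exactly as you do, and univalence is obtained by combining the non-negativity of $\Im P_N(e^{it})$ on $[0,\pi]$ (which the paper records explicitly as \eqref{ImPN}, with the same $\sin t\,\bigl(\sin\frac{N+2}{2}t\bigr)^2$ factor you identify) with the strict monotonicity of $|P_N(e^{it})|$ coming from the hypothesis on $R_N$, to conclude that $P_N$ takes no value more than once on $\partial\mathbb D$ and is therefore univalent inside. The ``delicate step'' you flag at the end---the tangencies of the image curve to the real axis at the zeros of $\sin\frac{N+2}{2}t$ in $(0,\pi)$---is precisely what the paper dispatches by citing Titchmarsh's boundary-correspondence theorem \cite[6.4.5]{T} without further detail, so your write-up matches the published argument both in strategy and in the point where rigor is delegated.
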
  

\begin{proof} 
Indeed, taking the imaginary part we get
\begin{equation}\label{ImPN}
\Im(P_N(e^{it}))=\frac{1-\cos\frac{2\pi}{N+2}}{(N+2)(1-\cos t)}\,
\frac{\sin t \left(\sin\frac{N+2}{2}t\right)^2}{\bigl(\cos t-\cos\frac{2\pi}{N+2}\bigr)^2}.
\end{equation}
Since $\Im(P_N(e^{it}))\ge0$ on $[0,\pi]$, the monotonicity of $R_N(x)$ in $x$ implies the monotonicity of $|P_N(e^{it})|$ in $t$ which implies that $P_N(z)$  takes no value more than once on $\partial \mathbb D$,
thus the polynomial $P_{N}(z)$ is univalent in $\mathbb D$ (cf. \cite[6.4.5, p.~201]{T}). Furthermore,
$$
P_N(-1)=\frac{1}{2\bigl( \cos \pi- \cos\frac{2\pi}{N+2}\bigr)}=-\frac14\sec^2\frac{\pi}{N+2}.
$$
\end{proof}

\section{Univalence for small N}

Our first observation is that $R_N(x)$ is of the form  
\begin{equation}\label{eq:form}
 \frac{T_N(x)}{(1+x^2)^{N-1}}
 \end{equation}
where $T_N$ is an even polynomial of degree $2(N-1)$.
Indeed, one shows by induction that $\cos (n \arctan x)$ (or $\sin (n \arctan x)$) is a rational function of the form
\[ \frac{c_n(x)}{(1+x^2)^{\frac n 2}} \]
where $c_n$ is a polynomial.
An ugly but elementary computation implies then that $R_N$ is of the form shown in equation~\eqref{eq:form}. \\

The benefit is that the monotonicity can now checked by a deterministic algorithm: by using a Sturm sequence one can count the real roots of the numerator of the derivative of $R_N(x)$:
\[ (1+x^2)^{N-2} \Delta_N(x):=(1+x^2)^{N-2} \left( T_N'(x)(1+x^2) - 2 (N-1) x T_N(x)\right). \] 
This allows us to determine the univalence of the function $P$:


\subsection{The case $N=1$}
In this case $T_1(x)=4$, thus the Koebe radius is 1. 


\subsection{The case $N=2$}
In this case $T_2(x)=9+x^2$, and the Koebe radius $r_2$ is $|P_2(-1)|=1/2.$


\subsection{The case $N=3$}
In this case the polynomial $P_3(z)$ is univalent again.
\begin{align*}
T_3(x)&=
-\frac {2}{5} \left(-27-9\sqrt{5} - (18+10 \sqrt 5 ) x^2 - (35 - 15 \sqrt 5 ) x^4\right) 
\end{align*}

As $\Delta_3$ is quadratic, is easy to check that $R_3$ is decreasing. on $\IR^+$.
This implies the estimate $r_3\le|P_3(-1)|=\frac{3-\sqrt 5}2= 0.382\ldots$ for the Koebe radius. Note that for the Suffridge polynomial we have  $|S_{3}(-1)|=0.3905\ldots$ and the minimal distance from the image of the unit circle to the origin is $0.3849\ldots$ \cite{DDS}. These estimates imply a negative answer to Dimitrov's question for cubic polynomials.


\subsection{The case $N=4$}
In this case the polynomial $P_4(z)$ is univalent (see \cite{J}). We have
\begin{align*}
T_4(x)=\frac{4}{9} \left(x^2+9\right) \left(x^4-2 x^2+9\right)
\end{align*}
(This can also be seen from $\Delta$ which is biquadratic)
The discriminant is $-37.13\ldots,$ therefore the smallest value for $R_4(x)$ is at $-1$,
 which implies  $r_4\le|P_4(-1)|=1/3$.
 

\subsection{The case $N=5$}

\begin{align*}
T_5(x)&=
   -49 x^8 \left(121 \sin \left(\frac{\pi }{14}\right)-42 \left(3+5 \sin \left(\frac{3 \pi
   }{14}\right)\right)+55 \cos \left(\frac{\pi }{7}\right)\right)
   \\
&\quad + 4 x^6 \left(8924-9107 \sin
   \left(\frac{\pi }{14}\right)+15094 \sin \left(\frac{3 \pi }{14}\right)-4507 \cos \left(\frac{\pi
   }{7}\right)\right)\\
&\quad +2 x^4 \left(84326-20935 \sin \left(\frac{\pi }{14}\right)+116342 \sin
   \left(\frac{3 \pi }{14}\right)+33443 \cos \left(\frac{\pi }{7}\right)\right)\\
&\quad -4 x^2 \left(37328+61431
   \sin \left(\frac{\pi }{14}\right)+31802 \sin \left(\frac{3 \pi }{14}\right)+133139 \cos
   \left(\frac{\pi }{7}\right)\right) \\
&\quad +21 \left(22702+3859 \sin \left(\frac{\pi }{14}\right)+30218 \sin
   \left(\frac{3 \pi }{14}\right)+31141 \cos \left(\frac{\pi }{7}\right)\right) \\
   &\quad  \times \frac{1}{784
    \left(\sin \left(\frac{3 \pi }{28}\right)+\cos \left(\frac{3 \pi
   }{28}\right)\right)^{14}}\\
\end{align*}
and 
\begin{align*}
\frac {\Delta}{16x} &=
   x^6 \left(-1375+1589 \sin \left(\frac{\pi }{14}\right)-2402 \sin \left(\frac{3 \pi
   }{14}\right)+906 \cos \left(\frac{\pi }{7}\right)\right) \\
   &\quad -x^4 \left(28777+3193 \sin \left(\frac{\pi
   }{14}\right)+35530 \sin \left(\frac{3 \pi }{14}\right)+23482 \cos \left(\frac{\pi
   }{7}\right)\right)\\
   &\quad +x^2 \left(98155+81679 \sin \left(\frac{\pi }{14}\right)+105874 \sin \left(\frac{3
   \pi }{14}\right)+216430 \cos \left(\frac{\pi }{7}\right)\right)\\
   &\quad-5 \left(51407+14247 \sin
   \left(\frac{\pi }{14}\right)+66638 \sin \left(\frac{3 \pi }{14}\right)+78710 \cos \left(\frac{\pi
   }{7}\right)\right)
   \end{align*}

 Again $P_5(z)$ is univalent and this gives us an estimate on the Koebe radius $r_5\le|P_5(-1)|=0.3080\ldots.$ 


\subsection{The case $N=6$} 
In this case
\begin{align*}
T_6(x) &=\left(6-4 \sqrt{2}\right) x^{10}+\left(246-172 \sqrt{2}\right) x^8+4 \left(70
   \sqrt{2}-99\right) x^6 \\
   &\quad -4 \left(30 \sqrt{2}-61\right) x^4-10 \left(9+2 \sqrt{2}\right)
   x^2+36 \sqrt{2}+54
\end{align*}
and
\begin{align*}
\frac {\Delta}{16 x} &= \left(19 \sqrt{2}-27\right) x^8+\left(222-156 \sqrt{2}\right) x^6 \\
   &\quad+ \left(150
   \sqrt{2}-240\right) x^4+ \left(106-20 \sqrt{2}\right) x^2+ \left(-45-25 \sqrt{2}\right)
\end{align*}

This is the last situation where we can find the roots exactly and this implies the estimate for the Koebe radius $r_6\le|P_6(-1)|=0.2929\ldots.$ We conjecture that the estimates obtained are in fact true values.

\subsection{Larger $N$}

As mentioned above, by using the Weierstrass transform, univalence follows from the study of the roots of $\Delta_N$. We used Mathematica that check exactly that $\Delta(x)$ has no real roots outside $0$ for $N$ up to $51$ and the growth of $R_N$ can be checked exactly for any $N$. 

\section{Koebe radius for typically real polynomials}
Let us recall that a polynomial $p(z)$ with real coefficients  is \emph{typically real} in $\mathbb D$ if $\Im(p(z))\Im(z)\ge 0$ for $z\in \mathbb D$. Since $\Im(P_N(e^{it}))\ge 0$ on $[0,\pi]$, the polynomials \eqref{P} are typically real and  the formula \eqref{PN-1} implies the following estimate on the Koebe radius for typically real polynomials:
\begin{equation}\label{KN}
r_N\le \frac14\sec^2\frac{\pi}{N+2}.
\end{equation}

This estimate may be complemented by an estimate from below. {
Indeed, in 1916 Ludwig Bieberbach} proved the estimate 
\begin{equation}\label{a2}|a_2|\le 2\end{equation} 
for the the second Taylor coefficient of a function from ${\mathcal S}.$ 
This estimate implies  the Koebe conjecture by the following beautiful argument. Let $f\in \mathcal S$, $f(z)=z+\alpha_2z^2+\cdots$ and $\gamma\not\in f(\mathbb D)$. Then
\begin{align*}
\frac{f(z)}{1-\frac{f(z)}\gamma} &=f(z)\biggl(1+\frac{f(z)}\gamma+\cdots \biggr)=
                                   (z+\alpha_2z^2+\cdots )\biggl(1+\frac z\gamma +\cdots \biggr)\\
  &=z+\biggl(a_2+\frac1\gamma\biggr)z^2+\cdots.
  \end{align*}
By the Bieberbach estimate we have
$|\alpha_2+\frac1\gamma|\le 2$, hence  
$|\gamma|\ge\frac1{2+|\alpha_2|}$,
and again by~\eqref{a2} we get
$ 
|\gamma|\ge\frac14
$
which is a statement of Koebe Theorem.

The above argument implies that
$  r_N\ge\frac1{2+\sup|a_2|}$. W.~Rogosinski and G.~Szeg\"o \cite{RS}  got an estimate for the second coefficient of a typically real polynomial, $|a_2|\le2\cos2\psi_N$, where $\psi_N=\pi/(N+3)$ if $N$ is odd, and $\psi_N$ is the smallest positive root of the equation
$
(N+4)\sin\,(N+2)\psi_N+(N+2)\sin\,(N+4)\psi_N=0
$ 
if $N$ is even. Since  univalent polynomials with real coefficients are typically real, we get an estimate on the Koebe radius for univalent  polynomials:
\begin{equation}\label{KN1}
r_N\ge\frac14\sec^2\psi_N. 
\end{equation}


\section*{Acknowledgements} {The authors are grateful to Konstantin Dyakonov and Paul Hagelstein for numerous fruitful discussions, useful comments and observations. They also would like to thank Adhemar Bultheel and Plamen Iliev for their time and interesting exchanges and to thank Jerzy Trzecjak for the help in preparation of manuscript.}

\end{document}